\begin{document}
\title[On the asymptotic behaviour of a fifth order difference equation]{On the asymptotic behaviour of the solutions  of a fifth order difference equation}
\author[G. L. Karakostas]
{George L. Karakostas}

\address{George L. Karakostas \newline
 Department of Mathematics, University of Ioannina,
 451 10 Ioannina, Greece}
\email{gkarako@uoi.gr, gkarako@hotmail.com}

\date{June 2024}
\subjclass[2010]{39A05, 39A30}
\keywords{5th order difference equations; asymptotic behaviour}
 \begin{abstract} The first aim of this note is to make clear what is the equilibrium of a  fifth order difference equation studied in the literature. Next  the  investigation of the whole asymptotic behaviour of the solutions of the equation is presented.   \end{abstract}
 \maketitle
\numberwithin{equation}{section}
\newtheorem{theorem}{Theorem}[section]
\newtheorem{lemma}[theorem]{Lemma}
\newtheorem{definition}[theorem]{Definition}
\newtheorem{example}[theorem]{Example}
\newtheorem{property}[theorem]{Property}
\newtheorem{remark}[theorem]{Remark}
\section{Introduction}
In this note we deal with  the difference equation 
\begin{equation}
\label{a}x_{n+1}= ax_{n-1}+ \frac{bx_{n-1}x_{n-4}}{cx_{n-4} + dx_{n-2}},
\end{equation}
where the parameters $a, b, c, d$ and the initial values $x_j,$ $ j = 0, -1, -2, -3, -4$ are positive real numbers.  
Equation \eqref{a} has been recently
investigated by Elsayed, and his coauthors in \cite{EAM} where  in some special
cases, closed-form formulas for the solutions  are given, and  some facts on the boundedness and  stability  are exhibited. More precisely,  it is claimed that under the condition  $(1-a)(c+d)\neq b,$ the point 0 is an equilibrium and a  global attractor. Then Stevi\'c, et all in  \cite{SIK} (pp. 22668, 22669) express their doubts about the correctness  of the results in \cite{EAM} and they claim that 0 can not be an equilibrium, because  in the process of solving equation $$ax+\frac{bx^2}{(c+d)x}=x,$$ the point 0 must be excluded. Also, they say that it can happen that some solutions may converge to something which is not an equilibrium.   And in order to  support their claim, they give an example which  shows that  unbounded solutions exist. \par 
With motivation the previous remarks, in this note  we show that, indeed, the results in \cite{EAM} are not correct, and the behaviour of the solutions depends on the parameters.  In particular, we show that if $A:=(c+d)(1-a)-b>0,$ the point  0 is an equilibrium which attracts all  solutions with positive initial values, while, if $A<0$, any such solution converges to $+\infty.$ The case $A=0$ is more interesting and we shall discuss  it in the text.
  \section{The main results}  We begin with a little elaboration on the definition of the equilibrium, considering a general difference equation \begin{equation}\label{e2}x_{n+1}=G(x_n, x_{n-1}, \cdots, x_{n-k}).\end{equation}  Assume that a solution $(x_n)$ of equation \eqref{e2} converges to some finite point $a$, as $n$ tends to $+\infty$.   Then, it holds $$\lim_{n\to+\infty} x_{n-m}=a,$$ for any fixed $m.$ Passing to the limits, from \eqref{e2} we get $a=G(a, a, \cdots,a),$ whenever the right argument is defined. This  means that $a$ is a fixed point of the equation.  Of course this happens when $G$ is a continuous function at $a$. Hence,  in order to find the equilibria of equation \eqref{e2} we must solve the equation 
$$a=\lim_{v\to a}G(v, v, \cdots, v),$$ whenever the limit in the right side exists. 
\par Our purpose here is to discuss the existence and stability of the equilibrium of  \eqref{a}, where in finding the fixed points  the factor $0/0$ appears. Furthermore, we discuss the asymptotic behaviour of the solutions of equation \eqref{a} and we show that this behaviour depends on the sign of the quantity  $A$ defined above and of $B:=bd-(c+d)^2.$ 
\par  Consider  the algebraic equation \eqref{a}. Solving equation   $$au+\lim_{v\to u}\frac{bv^2}{(c+d)v}=u$$ we obtain $u=0$, which is the unique fixed point,  provided that the condition $A\neq 0$ is satisfied.\par As we said earlier, equation \eqref{a} is studied in \cite{EAM}, where the results seem to be not correct. Here we discuss the asymptotic behaviour of the solutions of \eqref{a}, but, first, we need to refer the following result, and notice that,   it has been proved as an important tool in the literature: 
 \begin{theorem}\label{T1} Theorem 2 (\cite{CFR}, p. 311) If $T$ is an $n$-dimensional 
differentiable function with fixed point $X$ and $J$ is the Jacobian matrix of $T$ evaluated at $X$, then $X$ is a locally stable fixed point if all eigenvalues of $J$ have absolute value less than 1. If at least one of these absolute values is strictly greater than 1, the  fixed point is unstable.\end{theorem}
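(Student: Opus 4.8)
The statement is a standard linearization (principle of linearized stability) theorem for discrete dynamical systems, and the plan is to deduce the nonlinear behaviour near $X$ from the spectrum of $J$ by working in a norm adapted to $J$. First I would translate the fixed point to the origin, replacing $T$ by $\tilde T(y):=T(X+y)-X$, so that $\tilde T(0)=0$ and $D\tilde T(0)=J$. Differentiability then lets me write $\tilde T(y)=Jy+R(y)$ with $\|R(y)\|=o(\|y\|)$ as $y\to 0$. The crucial tool throughout is the elementary spectral fact that for every $\varepsilon>0$ there is a norm $\|\cdot\|_*$ on $\mathbf{R}^n$ (a Lyapunov norm, obtained from the Jordan form of $J$) whose induced operator norm satisfies $\|J\|_*\le \rho(J)+\varepsilon$; this is what converts information about the moduli of the eigenvalues into genuine contraction or expansion estimates, since the ordinary operator norm of $J$ may well exceed $1$ even when every eigenvalue lies inside the unit disc.

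For the stability assertion, suppose every eigenvalue has modulus less than $1$, so $\rho(J)<1$. Choosing $\varepsilon$ with $r:=\rho(J)+\varepsilon<1$ and the corresponding adapted norm, I would estimate $\|\tilde T(y)\|_*\le \|Jy\|_*+\|R(y)\|_*\le (r+\delta)\|y\|_*$ on a sufficiently small ball $\{\|y\|_*\le\eta\}$, where $\delta$ is made small by shrinking $\eta$ using $R(y)=o(\|y\|)$. Taking $\eta$ so that $q:=r+\delta<1$, the ball is forward invariant and $\tilde T$ contracts it, whence every orbit starting in the ball stays there and converges geometrically to $0$. This gives local asymptotic stability of $X$, hence in particular stability.

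For the instability assertion, suppose some eigenvalue satisfies $|\lambda|>1$. I would split $\mathbf{R}^n=E^u\oplus E^c$ into the generalized eigenspace $E^u$ associated with the eigenvalues of modulus $>1$ and the complementary invariant subspace $E^c$, and choose an adapted norm in which $J$ expands $E^u$ by a factor $\ge\mu>1$ while $\|J|_{E^c}\|_*\le\nu$ with $\mu>\max\{1,\nu\}$. Writing $P_u,P_c$ for the associated projections and introducing the unstable cone $C:=\{y:\|P_u y\|_*\ge\|P_c y\|_*\}$, the estimates on the two components together with $R(y)=o(\|y\|)$ show that on a small ball $C$ is forward invariant and that $\|\tilde T(y)\|_*\ge\kappa\|y\|_*$ there for some $\kappa>1$. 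Consequently an orbit started at a small point of $C$ remains in $C$ and grows geometrically until it must leave the ball; since such points exist arbitrarily close to the origin, $X$ cannot be stable.

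The main obstacle is the passage from the spectrum to quantitative estimates: neither assertion follows from the naive Euclidean operator norm of $J$, and the whole argument hinges on constructing the adapted Lyapunov norm and, in the unstable case, on verifying that the nonlinear remainder $R$ does not destroy the forward invariance of the cone $C$. This is exactly where the $o(\|y\|)$ control on $R$ and the spectral gap $\mu>\nu$ must be balanced against the size of the neighbourhood.
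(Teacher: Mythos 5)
The paper does not prove this statement at all: Theorem \ref{T1} is imported verbatim from the book of Cull, Flahive and Robson \cite{CFR} and is used only as a black box in the analysis of case (3a). So there is no ``paper proof'' to compare against, and the relevant question is simply whether your argument is a correct self-contained proof of the quoted principle of linearized stability. It is, and it is the standard one. Your stability half is complete as sketched: the adapted (Lyapunov) norm with $\|J\|_*\le\rho(J)+\varepsilon<1$ plus the $o(\|y\|)$ remainder gives a forward-invariant ball on which $\tilde T$ is a contraction toward $0$, hence local asymptotic stability. Your instability half via the splitting $E^u\oplus E^c$ and the invariant expanding cone is also the right mechanism; the one place that deserves explicit care is the choice of the constant $\nu$: since $E^c$ collects the eigenvalues of modulus $\le 1$, the adapted norm only gives $\|J|_{E^c}\|_*\le 1+\varepsilon$, not $\nu<1$, so the spectral gap you exploit is $\mu>1+\varepsilon\ge\nu$ with $\varepsilon$ small, and the cone estimates must be written so that the comparison is between $\mu-O(\delta)$ and $\nu+O(\delta)$ with $\delta$ controlled by the radius of the ball. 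With a product or maximum norm on $E^u\oplus E^c$ this works exactly as you describe, and the conclusion that orbits starting in the cone arbitrarily close to $X$ must exit a fixed neighbourhood does establish Lyapunov instability. In short: your proposal supplies a correct proof of a result the paper only cites, and the cone construction is precisely the extra idea needed for the instability direction, which is the nontrivial half.
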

Our main result concerning \eqref{a} is given in the following theorem:
\begin{theorem} Assume that the parameters $a, b, c, d$ are as above. Then we have the following facts:\par
1) If $A>0$, then all positive solutions converge  to $0$. \par
2)  If  $A<0$,  then all positive solutions converge  to $+\infty$. 
\par 3) If $A=0,$  then, for any positive real number $w$, the sequence  $x_n=w, \enskip n=-4, -3, -2, -1,  \cdots$ is a solution, which, \par \hskip .2in 3a) if $B>0$, then it is unstable. Moreover, \par \hskip.2in 3b)  if  $B\leq 0$, then there exist solutions which converge to a positive real number $\neq 0.$     \end{theorem}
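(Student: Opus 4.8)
The plan is to collapse the five-dimensional problem to a scalar first-order system by passing to the ratios $r_n := x_{n+1}/x_{n-1}$. Dividing \eqref{a} by $x_{n-1}$ gives $r_n = a + \frac{b}{c + d\,(x_{n-2}/x_{n-4})}$, and since $x_{n-2}/x_{n-4} = r_{n-3}$, the ratios obey the closed recurrence
\[ r_n = g(r_{n-3}), \qquad g(t) := a + \frac{b}{c+dt} = \frac{ad\,t + (ac+b)}{d\,t + c}. \]
Thus $g$ is a decreasing (indeed convex) M\"obius map sending $(0,\infty)$ into the bounded interval $(a,\,a+b/c)$, so every positive orbit is trapped there. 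I would first record that $g$ has a unique positive fixed point $\rho^*$, and that, because $1-g(1) = A/(c+d)$, one has $\rho^* < 1$, $\rho^* = 1$, or $\rho^* > 1$ according as $A>0$, $A=0$, or $A<0$.

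Next I would show that every positive orbit of $g$ converges to $\rho^*$. Since $g$ is decreasing, $g\circ g$ is increasing, so each orbit splits into two monotone bounded subsequences, which must converge to positive fixed points of $g\circ g$. Being represented by the square of the matrix of $g$, whose determinant $-bd<0$ forces distinct real eigenvalues, the iterate $g\circ g$ has the same fixed points as $g$, hence only $\rho^*$ in $(0,\infty)$; therefore $r_n \to \rho^*$. (Equivalently, one checks $|g'(\rho^*)| = 1 - (c+ad)/(c+d\rho^*) < 1$, ruling out $2$-cycles.) For part 1, $A>0$ gives $\rho^*<1$: choosing $q\in(\rho^*,1)$, eventually $x_{n+1}\le q\,x_{n-1}$, so each parity subsequence decays geometrically and $x_n\to 0$. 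For part 2, $A<0$ gives $\rho^*>1$ and the symmetric estimate forces geometric growth, $x_n\to+\infty$.

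For part 3 ($A=0$) the constant sequence $x_n=w$ solves \eqref{a} because $g(1)=1$, producing a whole line of equilibria. To settle 3a I would linearise the associated five-dimensional map at $x_n=w$ and invoke Theorem \ref{T1}. Writing $\mu := bd/(c+d)^2$, the only nonzero first-row entries of the Jacobian are the partials with respect to $x_{n-1}$, $x_{n-2}$, $x_{n-4}$, equal to $1$, $-\mu$, $\mu$; the characteristic polynomial then factors as $(\lambda^2-1)(\lambda^3+\mu)$, with roots $\pm1$ and the three cube roots of $-\mu$, each of modulus $\mu^{1/3}$. Since $\mu>1 \iff B>0$, when $B>0$ there are eigenvalues of modulus $>1$, so Theorem \ref{T1} yields instability. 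When $B\le 0$ we have $\mu\le1$, the only non-contracting eigenvalues are the neutral $\pm1$, and Theorem \ref{T1} is silent; here I would return to the ratio picture: $|g'(1)|=\mu\le1$ gives $r_n\to1$ (geometrically when $B<0$), so $\sum|\log r_n|<\infty$ and each parity-product $x_1\prod_j r_{2j}$ converges to a positive limit, exhibiting solutions tending to a positive value $\neq0$.

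The main obstacle is part 3b. Theorem \ref{T1} provides no information once eigenvalues lie on the unit circle, so the conclusion must come from the nonlinear ratio analysis rather than from linearisation. The delicate point is that the neutral eigenvalue $\lambda=-1$ corresponds to an undamped parity-alternating mode, so a generic nearby solution oscillates between two distinct positive parity-limits and need not converge; one must therefore select the solutions carrying no such mode (for instance the constant solutions, or initial data on the centre-stable manifold of the eigenvalue $1$) to guarantee a single positive limit. A secondary subtlety worth flagging is that, under the standing positivity assumptions, $A=0$ forces $B=-(c+d)(ad+c)<0$, so the hypotheses $B=0$ and $B>0$ in case 3 are in fact never met; the Jacobian computation above is nonetheless the natural vehicle for 3a.
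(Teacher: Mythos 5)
Your proposal is correct, and its backbone---passing to the ratios $r_n=x_{n+1}/x_{n-1}$, which obey a first--order M\"obius recurrence with step $3$, and linearising the five--dimensional map for case 3a---coincides with the paper's. You differ in two places. To get $r_n\to\rho^*$ the paper solves the M\"obius recurrence explicitly: it sets $w_n=c+dy_{3n}$, writes $w_n=u_n/u_{n-1}$ to reduce to the linear equation $u_{n+1}-(c+ad)u_n-bdu_{n-1}=0$ with roots $\rho_\pm$, and reads off $\lim w_n=\rho_+$; your route via monotonicity of $g\circ g$ and the fixed--point count for a non-identity M\"obius map (determinant $-bd<0$, trace $ad+c>0$) is equally valid, and it quietly handles the degenerate case $l_1=0$ that the explicit--formula argument leaves unaddressed (there it is excluded because $w_n>c>0$ while $\rho_-<0$). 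For 3b the paper exhibits one concrete non-constant solution of \eqref{eq} with $a=\tfrac{1}{2}$, $b=c=d=1$ and initial data $x_0=x_1=x_2=1$, $x_3=x_4=\mu$, writes it as $\prod K_i$ with $K_{i+1}=\tfrac{1}{2}+\tfrac{1}{1+K_i}$, and proves convergence by showing $\sum\ln K_i$ is alternating and satisfies the Leibniz test; your argument via $|g'(1)|=bd/(c+d)^2<1$, eventual geometric convergence of the ratios and summability of $\sum|\log r_n|$ is more general (it applies to every positive solution and every admissible parameter choice), at the price of having to decide when the even and odd limits agree---your fallback to the constant solutions does satisfy the literal existence claim, and rescaling one parity class of any solution produces non-constant converging ones. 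Two of your side remarks genuinely improve on the paper: positivity together with $A=0$ forces $B=-(c+d)(ad+c)<0$, so case 3a and the subcase $B=0$ are vacuous; and equation \eqref{a} is invariant under independent rescaling of the even- and odd-indexed terms, so generic solutions in case 3 approach a two-periodic pattern rather than a limit, which is exactly the neutral eigenvalue $-1$ you point to.
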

\begin{proof} Let $(x_n)$ be a solution of \eqref{a} with positive initial values. Obviously, it stays positive for all indices $n.$ We set $$y_{n+1}:=\frac{x_{n+1}}{x_{n-1}}$$ and next let $$w_n:=c+dy_{3n}.$$ Then from  the original equation we obtain  $$w_{n+1}=c+ad+\frac{bd}{w_n}.$$ For the sequence $u_0:=1$, $u_n:=w_1w_2\cdots w_n,$ we obtain $w_n=\frac{u_n}{u_{n-1}}$ and the relation
$$u_{n+1}-(c+ad)u_n-bdu_{n-1}=0,$$ which is a linear difference equation. Its characteristic equation has the roots
$$\rho_{\pm}=\frac{1}{2}\big[c+ad\pm((c+ad)^2+4bd)^{1/2}\big]$$ and so we get
$$w_{n+1}=\frac{l_1\rho_+^{n+1}+l_2\rho_-^{n+1}}{l_1\rho_+^{n}+l_2\rho_-^{n}},$$ for some  constants $l_1, l_2,$ which depend on the initial values of the solution. 
Since $\rho_+>\rho_-,$ we obtain $\lim w_n=\rho_+.$ If we set $w_n':=c+dy_{3n+1}$ or $w''_n:=c+dy_{3n+2}$, then we obtain the same result. These facts imply that
$$\lim\frac{x_{n+1}}{x_{n-1}}=\frac{\rho_+-c}{d}.$$ \par Now, we distinguish the following  cases:\par Case 1) $\frac{\rho_+-c}{d}>1$, which holds if and only if  $A=(c+d)(1-a)-b<0$. Choose any $\kappa$ such that $\frac{\rho_+-c}{d}>\kappa>1.$ Then there is some $n_0$ such that 
$$\frac{x_{n+1}}{x_{n-1}}>\kappa, \enskip n\geq n_0.$$ Thus we have
$$x_{n+1}>\kappa x_{n-1}>\kappa^2 x_{n-3}>\cdots>\kappa^{m_n} x_{j_0},$$ for some $j_0\in\{n_0, n_0+1\}$ and some sequence $m_n\to +\infty.$ In this case the solution converges to $+\infty$\footnote{Notice that, if we get $a=c=\frac{1}{2}$ and $b=d=1$, then we obtain that $\frac{\rho_+-c}{d}=\frac{\sqrt{5}}{2}>1,$ as it was found in \cite{SIK}.}. \par 
Case 2) $\frac{\rho_+-c}{d}<1$, which holds if and only if $A>0$. Then, working as above, we can  see that the solution tends to zero. \par
Case 3) $\frac{\rho_+-c}{d}=1$. In this case we have $A=(c+d)(1-a)-b=0$ and equation \eqref{a} becomes 
\begin{equation}\label{eq} x_{n+1}=(1-\frac{b}{c+d})x_{n-1}+\frac{bx_{n-1}x_{n-4}}{cx_{n-4}+dx_{n-2}}.\end{equation}
\par To continue, we rewrite equation \eqref{eq} as a system in the 5-th dimensional space, as follows: We let $$y^{(1)}_n:=x_{n-4},\enskip y^{(2)}_n:=x_{n-3}, \enskip y^{(3)}_n:=x_{n-2}, \enskip y^{(4)}_n:=x_{n-1}, \enskip y^{(5)}_n:=x_{n}.$$Then the vector $Y_n:=(y^{(1)}_n, y^{(2)}_n, y^{(3)}_n, y^{(4)}_n, y^{(5)}_n)$ satisfies the equation
\begin{displaymath}
{Y_{n+1}} =
\left( \begin{array}{c}
y^{(2)}_n \\
y^{(3)}_n\\
y^{(4)}_n\\
y^{(5)}_n\\
(1-\frac{b}{c+d})y^{(4)}_n+\frac{by^{(4)}_ny^{(1)}_n}{cy^{(1)}_n+dy^{(3)}_n}
\end{array} \right).
\end{displaymath} \par 
It is clear that a fixed point of this equation is a vector of the form ${\bf{w}}:=(w, w, w, w, w)^T,$ where $w$ is any positive real number. \par A linearisation of this equation at a fixed vector ${\bf{w}}$ gives the equation
$$Y_{n+1}=AY_n,$$ where
\begin{displaymath}
A =
\left( \begin{array}{ccccc}
0&1&0&0&0 \\
0&0&1&0&0\\
0&0&0&1&0\\
0&0&0&0&1\\
p&0&-p&1&0
\end{array} \right), 
\end{displaymath} where 
$p:=\frac{bd}{(c+d)^2}.$\par 
Obviously the matrix $A$ does not depend on the vector $\bf{w}.$ The eigenvalues of the matrix $A$ are the real numbers $\pm1,$ $ p^{1/3},$ $ p^{1/3}(1\pm i\sqrt{3})/2.$ Hence its spectral radius $\rho$ is equal to $\max\{1, p^{1/3}\}.$ 
Therefore, according to  theorem \ref{T1}, in case $B>1$ we have $ \rho=p^{1/3}>1,$ and so any fixed point $\bf{w}$ is unstable. This proves the case (3a). 
\par
In case (3b), we have $B\leq 0$ and so $\rho=1$. In order to see what can happen in this case, we give the following application: Consider the solution $(x_n)$ with initial values  \begin{equation}\label{v1}x_0=x_1=x_2=1\enskip\text{and}\enskip x_3=x_4=\mu,\end{equation} where $\mu$ is any positive real number. Obviously, if $\mu=1$ the solution is the constant sequence $x_n=1,$ $n=0, 1, \cdots.$ So, we assume that $\mu\neq 1.$ \par \par First, we shall show that the solution is given by $$x_{3n+j}=\prod_{i=0}^{n}K_i,\enskip \text{for any}\enskip j=0, 1, 2\enskip\text{and}\enskip n=0, 1, 2, \cdots$$ where 
\begin{equation}\label{m}K_0:=1, \enskip K_1:=\mu, \enskip K_{i+1}:=\frac{1}{2}+\frac{1}{1+K_{i}}, \enskip i=1, 2, \cdots.\end{equation} Indeed, we observe that 
$$x_{3n}=x_{3n-2}\big(\frac{1}{2}+\frac{x_{3n-5}}{x_{3n-5}+x_{3n-3}}\big)=\prod_{i=0}^{n-1}K_i\big(\frac{1}{2}+\frac{\prod_{i=0}^{n-2}K_i}{\prod_{i=0}^{n-2}K_i+\prod_{i=0}^{n-1}K_i}\big)=$$
$$=\prod_{i=0}^{n-1}K_i\big(\frac{1}{2}+\frac{1}{1+K_{n-1}}\big)=\prod_{i=0}^{n}K_i.$$ Exactly the same things we have for the value $x_{3n+1}$, while for $x_{3n+2}$ we get
$$x_{3n+2}=x_{3n}\big(\frac{1}{2}+\frac{x_{3n-3}}{x_{3n-3}+x_{3n-1}}\big)=\prod_{i=0}^{n}K_i\big(\frac{1}{2}+\frac{\prod_{i=0}^{n-1}K_i}{\prod_{i=0}^{n-1}K_i+\prod_{i=0}^{n-1}K_i}\big)=$$
$$=\prod_{i=0}^{n}K_i\big(\frac{1}{2}+\frac{1}{1+1}\big)=\prod_{i=0}^{n}K_i.$$ \par Next we claim  that the sequence $(K_i)$ converges to 1. \par Indeed, we observe that for each $i$ we have $K_i>0,$ thus $K_{i+1}>\frac{1}{2},$ $i=0, 1, 2, \cdots.$ Hence $K_i<\frac{1}{2}+\frac{1}{1+\frac{1}{2}}=\frac{7}{6}.$ This means that the sequence $(K_i)$ is bounded and it stays away from zero. Therefore there are two full limiting sequences (see, e.g., \cite{K1, K2, K3}) $\{M_i:\enskip i\in\mathbb{Z}\}$ and $\{N_i:\enskip i\in\mathbb{Z}\}$  such that $$\frac{1}{2}\leq N_0=\liminf K_i\leq\limsup K_i=M_0\leq\frac{7}{6}.$$ Then from the relations 
$$M_0=\frac{1}{2}+\frac{1}{1+M_{-1}}\leq\frac{1}{2}+\frac{1}{1+N_0}=\frac{3+N_0}{2(1+N_0)}$$ and 
$$N_0=\frac{1}{2}+\frac{1}{1+N_{-1}}\geq\frac{1}{2}+\frac{1}{1+M_0}=\frac{3+M_0}{2(1+M_0)}$$ we obtain $M_0=N_0,$ which means that $$\limsup K_i=\liminf K_i=:P\in[\frac{1}{2}, \frac{7}{6}].$$ Then from  \eqref{m} we get the relation $P=\frac{1}{2}+\frac{1}{1+P}$ which is satisfied when $P=1,$ which proves our claim. \par We are interesting in the limit of  the solution $(x_n)$. First we discuss the existence of the limit of the sequence of the products $\prod_{i=0}^{n}K_i$, or, equivalently, the "sum" of the series $\sum_{i=0}^{+\infty}\ln K_i.$ 
We shall show that the latter is an alternating series and it satisfies the Leibniz rule. \par Indeed, as we proved, it holds $\lim_i\ln K_i=\ln 1=0.$ \par To proceed, we shall discuss the cases $\mu<1$ and $\mu>1.$  \par First, consider the case $\mu\in(0,1),$  thus we have  $\ln K_1=\ln \mu<0$ and $\ln K_2=\ln \frac{3+\mu}{2(1+\mu)}>\ln 1=0.$ To apply induction, assume that  for some $i$ it holds $\ln K_{2i-1}<0.$ Then we have $K_{2i-1}<1$ and so $$\ln K_{2i}=\ln(\frac{1}{2}+\frac{1}{1+K_{2i-1}})>\ln(\frac{1}{2}+\frac{1}{1+1})=0.$$ This means that $K_{2i}>1$ and so, again, 
$$\ln K_{2i+1}=\ln(\frac{1}{2}+\frac{1}{1+K_{2i}})<\ln(\frac{1}{2}+\frac{1}{1+1})=0.$$ These facts show that the series $\sum_{i=0}^{+\infty}\ln K_i,$ is alternating. It remains to show that the absolute values of the terms form a decreasing sequence. This means that we have to show that $\ln K_{2i}>-\ln K_{2i+1}$ and $-\ln K_{2i+1}>\ln K_{2i+2}.$ The first one is equivalent to $$1<K_{2i}K_{2i+1}=K_{2i}(\frac{1}{2}+\frac{1}{1+K_{2i}}),$$ or to $K_{2i}>\frac{-1+3}{2}=1,$ which is true. Similarly, we can see that the relation $-\ln K_{2i+1}>\ln K_{2i+2}$ holds, if and only, if $K_{2i+1}<1,$ which, also, is true. \par
\par Next assume that $\mu>1.$ Then we have  $\ln K_1=\ln \mu>0$ and $\ln K_2=\ln \frac{3+\mu}{2(1+\mu)}<\ln 1=0.$  As previously, we can see that $\ln K_{2i}<0$ and $\ln K_{2i+1}>0,$ for all $i,$ and moreover, the sequence of the absolute values is decreasing. \par Therefore by the Leibniz rule we conclude that the sum $\sum_{i=0}^{+\infty}\ln K_i$ exists in $\mathbb{R}$, or equivalently, the limit $\lim_{n\to+\infty}\prod_{i=0}^{n}K_i=\lim x_{3n+j}$ exists in $(0,+\infty)$ and it is independent of $j=0, 1, 2.$ This means that the sequence $(x_n)$ converges to a certain positive real number. \par To get an idea about the location of this limit for the value $\mu=\frac{1}{2}$, one can see that  the first values of $\prod_{i=0}^{n}K_i,$ for $n=1, 2, \cdots, 9, 10$ form the sequence 0.5, 0.58333, 0.56089, 0.56639, 0.56501, 0.56535, 0.56527, 0.56529, 0.56528, 0.56528.  It seems that for all $n>9$ we have the same value 0.56528, which shows that this is an approximate limit of the solution $(x_n)$ of equation \eqref{eq} with the given values \eqref{v1}.\end{proof}
\par
 \end{document}